\renewcommand\part{%
   \if@noskipsec \leavevmode \fi
   \par
   \addvspace{4ex}%
   \@afterindentfalse
   \secdef\@part\@spart}
\def\@part[#1]#2{%
    \ifnum \c@secnumdepth >\m@ne
      \refstepcounter{part}%
      \addcontentsline{toc}{part}{\thepart\hspace{1em}#1}%
    \else
      \addcontentsline{toc}{part}{#1}%
    \fi
    {\parindent \z@ \raggedright
     \interlinepenalty \@M
     \normalfont
     \ifnum \c@secnumdepth >\m@ne
       \large\bfseries \partname\nobreakspace\thepart.
       %\par\nobreak
     \fi
     \large \bfseries #2%
     %%%\markboth{}{}\par}% removing redefinition of headings
     \par}%
    \nobreak
    \vskip 3ex
    \@afterheading}
\def\@spart#1{%
    {\parindent \z@ \raggedright
     \interlinepenalty \@M
     \normalfont
     \large \bfseries #1\par}%
     \nobreak
     \vskip 3ex
     \@afterheading}
\newcommand{\F}{\mathbb{F}}
\newcommand{\Q}{\mathbb{Q}}
\newcommand{\Z}{\mathbb{Z}}
\newcommand{\rhobar}{{\overline{\rho}}}
\newcommand{\GL}{\operatorname{GL}}
\newcommand{\PGL}{\operatorname{PGL}}
\newcommand{\SL}{\operatorname{SL}}
\numberwithin{equation}{section}
\newtheorem{lemma}[equation]{Lemma}
\newtheorem{proposition}[equation]{Proposition}
\theoremstyle{definition}
\theoremstyle{remark}
\newtheorem{remark}[equation]{Remark}
\definecolor{darkgreen}{rgb}{0,0.5,0}
\let\@wraptoccontribs\wraptoccontribs
\begin{document}

\title[On $r$-isogenies over $\mathbb{Q}(\zeta_r)$]{On $r$-isogenies over $\mathbb{Q}(\zeta_r)$ \\ of elliptic curves with rational $j$-invariants}

\author{Filip Najman}
\address{University of Zagreb, Bijeni\v{c}ka Cesta 30, 10000 Zagreb, Croatia}
\email{fnajman@math.hr}

\date{\today}

\keywords{elliptic curves, Galois representations}
\subjclass[]{11G05}

\begin{abstract}
The main goal of this paper is to determine for which odd prime numbers~$r$ can an elliptic curve~$E$ defined over $\Q$ have an $r$-isogeny over $\Q(\zeta_r)$. We study this question under various assumptions on the 2-torsion of $E$. Apart from being a natural question itself, the mod~$r$ representations attached to such $E$ arise in the Darmon program for the generalized Fermat equation of signature~\((r,r,p)\), playing a key role in the proof of modularity of certain Frey varieties in the recent work of Billerey, Chen, Dieulefait and Freitas.
\end{abstract}

\thanks{I gratefully acknowledge support by QuantiXLie Centre of Excellence, a project co-
financed by the Croatian Government and European Union through the European Regional Devel-
opment Fund - the Competitiveness and Cohesion Operational Programme (Grants KK.01.1.1.01.0004
and PK.1.1.02) and by the Croatian Science Foundation under the project no. IP-2022-10-5008.}

\maketitle

{
\hypersetup{linkcolor=black}
%\tableofcontents
}

%%%%%%%%%%%%%%%%%%%%%%%%%%%%%%%%%%%%%%%%%%%%%%%%%%%%%%
\section{Introduction}
%%%%%%%%%%%%%%%%%%%%%%%%%%%%%%%%%%%%%%%%%%%%%%%%%%%%%%

In their recent work~\cite{BCDF3}, Billerey, Chen, Dieulefait and Freitas develop an approach to the generalized Fermat equation of signature~\((r,r,p)\) based on ideas from Darmon's program \cite{DarmonDuke} and a construction of Frey hyperelliptic curves due to Kraus. The results of the present paper play a key role in their proof of the modularity of the Jacobians of these Frey hyperelliptic curves and other abelian varieties considered by Darmon.

Let~\(r\geq 3\) be a prime number. For an elliptic curve~\(E\) defined over a number field~\(K\), we denote by~\(\rhobar_{E,r} : G_K \to \GL_2(\F_r)\) its mod~$r$ Galois representation (after fixing a basis for the $r$-torsion module~$E[r]$). The present paper is concerned with the (ir)reducibility of the representation~\(\rhobar_{E,r}\) where~\(K = \Q(\zeta_r)\) and~\(E\) is a base change to~\(K\) of an elliptic curve defined over~\(\Q\). %We use the letter $r$ to denote the prime degree of an isogeny (as opposed to the more standard $\ell$ of $p$) to better conform to the notation of \cite{BCDF3}.

In some cases our results will also hold for certain elliptic curves $E/\Q(\zeta_r)$ with $j(E)\in \Q$, not just those which are base changes of elliptic curves defined over $\Q$; in those instances we will state our results in this greater generality.

The reducibility of~\(\rhobar_{E,r} : G_{\Q(\zeta_r)}\to \GL_2(\F_r)\) is equivalent to~\(E/\Q(\zeta_r)\) having an isogeny of degree~\(r\) over~\(\Q(\zeta_r)\). We study this question under various assumptions on the \(2\)-torsion of~\(E\).

All code for Magma computations used to verify the claims in the paper can be found at
\begin{center}
{\href{https://github.com/F-Najman/r-isogenies}{\url{https://github.com/F-Najman/r-isogenies}}}.
\end{center}

\section*{Acknowledgements}

I thank Nicolas Billerey and Nuno Freitas for asking questions motivating this paper and for many helpful comments, Davide Lombardo for pointing out the proof of \Cref{prop1} for $r<17$, and the anonymous referees for their many helpful suggestions.

%%%%%%%%%%%%%%%%%%%%%
\section{Elliptic curves with $r$-isogenies over $\mathbb{Q}(\zeta_r)$}
%%%%%%%%%%%%%%%%%%%%%

In this subsection we determine when do elliptic curves $E$ defined over $\Q$, or in some instances with $j(E)\in \Q$, have an $r$-isogeny over $\Q(\zeta_r)$. Having an $r$-isogeny over $\Q(\zeta_r)$ is equivalent to $\rhobar_{E,r}(G_{\Q(\zeta_r)})$ being conjugate in $\GL_2(\F_r)$ to a subgroup of a Borel subgroup.

For a subgroup $G\leq \GL_2(\F_r)$, define $S(G):=G\cap \SL_2(\F_r)$. Denote by $C_s(r)$ the subgroup of diagonal matrices in $\GL_2(\F_r)$ and by $C_s^+(r)$ its normalizer in $\GL_2(\F_r)$. Let $\epsilon=-1$ if $r\equiv 3 \pmod 4$ and otherwise let $\epsilon\geq 2$  be the smallest integer which is not a quadratic residue modulo $r$. Denote by $C_{ns}(r)$ the group
$$C_{ns}(r):=\left\{ \begin{pmatrix}
  a & \epsilon b \\
  b & a
\end{pmatrix},(a, b) \in \F_r^2\backslash \{(0,0)\}\right\},$$
whose conjugates in $\GL_2(\F_r)$ are the non-split Cartan subgroups and let

\begin{equation}
\label{cns+def}
C_{ns}^+(r):=\left\{
\begin{pmatrix}
  a & \epsilon b \\
  b & a
\end{pmatrix}, (a, b) \in \F_r^2\backslash \{(0,0)\} \right\}
\cup
\left\{
 \begin{pmatrix}
  c & \epsilon d \\
  -d & -c
\end{pmatrix}, (c, d) \in \F_r^2\backslash \{(0,0)\} \right\}
\end{equation}
be the normalizer of $C_{ns}(r)$ in $\GL_2(\F_r)$.

\begin{lemma}\label{lem:subgroups}
Let $r$ be an odd prime. The group $S(C_s^+(r))$ is conjugate in~\(\GL_2(\F_r)\) to
 $$\left\{ \begin{pmatrix}
  a & 0 \\
  0 & a^{-1}
\end{pmatrix},\begin{pmatrix}
  0 & a \\
  -a^{-1} & 0

\end{pmatrix}, a \in \F_r^\times \right\}.$$

The group $S(C_{ns}^+(r))$ is the subset of matrices in \eqref{cns+def} with determinant $1$. The groups $S(C_s^+(r))$ and $S(C_{ns}^+(r))$ are not conjugate in $\GL_2(\F_r)$ to a subgroup of a Borel subgroup.

\end{lemma}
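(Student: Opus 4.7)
The proof splits into verifying the explicit forms of $S(C_s^+(r))$ and $S(C_{ns}^+(r))$, and then showing that neither sits inside a Borel. The key tool for the last step is the standard dictionary: a subgroup $H\leq \GL_2(\F_r)$ is conjugate to a subgroup of a Borel if and only if there is a common $H$-invariant line in $\F_r^2$, equivalently a fixed point of $H$ on $\PP^1(\F_r)$.

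For the first claim, I would start from the fact that the normalizer of the diagonal torus in $\GL_2(\F_r)$ consists of all diagonal and all anti-diagonal invertible matrices; intersecting with $\SL_2(\F_r)$ then yields the displayed form, with the anti-diagonal entries being $a$ and $-a^{-1}$ forced by $\det=1$. For the second claim, a direct determinant computation on the two blocks of \eqref{cns+def} gives $a^2-\epsilon b^2$ and $-(c^2-\epsilon d^2)$, so $S(C_{ns}^+(r))$ is tautologically cut out from \eqref{cns+def} by $\det=1$.

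For the non-containment in a Borel, I would exhibit obstructions to a common fixed line in each case. For $S(C_s^+(r))$ with $r\geq 5$, pick a diagonal element $\begin{pmatrix}a&0\\0&a^{-1}\end{pmatrix}$ with $a^2\neq 1$ (possible since $|\F_r^\times|\geq 4$): its only fixed lines in $\PP^1(\F_r)$ are $[1:0]$ and $[0:1]$, but the anti-diagonal element $\begin{pmatrix}0&1\\-1&0\end{pmatrix}$ swaps these; for $r=3$, the same anti-diagonal element has characteristic polynomial $x^2+1$, irreducible over $\F_3$, and so has no eigenvector in $\F_3^2$ at all. For $S(C_{ns}^+(r))$, any non-scalar $M=\begin{pmatrix}a&\epsilon b\\b&a\end{pmatrix}\in S(C_{ns}(r))$ with $b\neq 0$ has characteristic polynomial $x^2-2ax+1$ of discriminant $4\epsilon b^2$, which is a non-square in $\F_r$ because $\epsilon$ is, so $M$ has no eigenvector in $\F_r^2$. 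Such an $M$ exists because $S(C_{ns}(r))$ is the kernel of the norm map $C_{ns}(r)\cong \F_{r^2}^\times\to \F_r^\times$, hence cyclic of order $r+1\geq 4$, while the only scalars in $C_{ns}(r)$ are $\pm I$.

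The argument is elementary throughout; the one point requiring a moment's care is the small case $r=3$ for $S(C_s^+(r))$, where the diagonal subgroup degenerates to $\{\pm I\}$ and the non-Borel property must be read off the anti-diagonal element instead. Once that split is noted, no further subtlety arises.
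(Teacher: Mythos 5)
Your proof is correct, and it reaches the non-Borel conclusion by a different mechanism than the paper. The paper argues uniformly in both cases by showing the group acts freely on $\F_r^2\setminus\{(0,0)\}$, so that every orbit has length equal to the group order ($2(r-1)$ resp.\ $2(r+1)$), which exceeds the $r-1$ nonzero vectors available on any invariant line; the characteristic-polynomial computation there serves only to establish freeness. You instead exhibit explicit obstructions to a common fixed point on $\PP^1(\F_r)$: for the split case, a regular diagonal element whose two fixed lines are swapped by the Weyl element (with the $r=3$ degeneration handled separately via the irreducibility of $x^2+1$ over $\F_3$), and for the nonsplit case a single norm-one element with irreducible characteristic polynomial, using $a^2-1=\epsilon b^2$ to see the discriminant $4\epsilon b^2$ is a nonsquare. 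Your nonsplit argument is arguably cleaner (one element suffices and no counting is needed), while the paper's orbit count is uniform and avoids the case split at $r=3$. One small imprecision: you write that ``the only scalars in $C_{ns}(r)$ are $\pm I$,'' which is false as stated since $C_{ns}(r)$ contains all of $\F_r^\times\cdot I$; what you need, and what is true, is that the only scalars of determinant $1$, i.e.\ the only scalars in $S(C_{ns}(r))$, are $\pm I$, so the cyclic group of order $r+1\geq 4$ indeed contains a non-scalar element. This does not affect the validity of the argument.
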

\begin{proof}
It is straightforward to check that $S(C_s^+(r))$ and $S(C_{ns}^+(r))$ are of the claimed form and that $S(C_s^+(r))$ acts freely on $\F_r^2 \backslash \{(0,0)\}$, from which it follows that the length of each orbit is $2(r-1)$. As a fixed $1$-dimensional subspace would give an orbit of length $\leq r - 1$, it follows that there are no fixed $1$-dimensional subspaces of $\F_r^2$.

The group $S(C_{ns}^+(r))$ has order $2(r+1)$ since $\det :C_{ns}^+(r) \rightarrow\F_r^\times$ is onto. Examine the action of $S(C_{ns}^+(r))$ on $\F_r^2 \backslash \{(0,0)\}$. The characteristic polynomial of a matrix in $S(C_{ns}^+(r))$ is either $x^2-2ax+1$ or $x^2+1$ and hence $1$ is a root of it only in the first case when $a=1$, which happens only for the identity matrix. We conclude that $S(C_{ns}^+(r))$ acts freely on $\F_r^2 \backslash \{(0,0)\}$ and hence the orbit of any $(x,y)$ has cardinality $2(r+1)$ and again we can conclude that there are no fixed $1$-dimensional subspaces of $\F_r^2$.

\begin{comment}
To see that $S(C_{ns}^+(r))$ acts freely on $\F_r^2 \backslash (0,0)$, we have to show that $Ax=x$, where $A\in S(C_{ns}^+(r))$ and $x\in \F_r^2 \backslash (0,0)$ implies $A=I$.
Let first $A=\begin{pmatrix}
    a & \epsilon b \\
    b & a
  \end{pmatrix}.$
$$\begin{pmatrix}
    a & \epsilon b \\
    b & a
  \end{pmatrix}\begin{pmatrix}
                 x \\
                 y
               \end{pmatrix} = \begin{pmatrix}
                 x \\
                 y
               \end{pmatrix}.$$
It follows that
$$x(a-1)=-\epsilon b y,$$
$$y(a-1)=-bx.$$
If $a=1$, then $1-\epsilon b^2=1$, so $b=0$, so we have the identity matrix. If $a\neq 1$, we get
$$y(a-1)=\frac{\epsilon b^2 y}{a-1},$$
so we get $-2ay=0$. Suppose $y\neq 0$; then $a=0$, and we get
$$x= \epsilon b y,$$
$$y=bx,$$
from which we get $y=\epsilon b^2 y$. It follows $\epsilon b^2$, which is a contradiction with $\epsilon$ being a non-square. Thus $y$ has to be $0$; now we get
$$x(a-1)=0,$$
$$bx=0.$$
$x$ cannot be $0$ (since we supposed that $(x,y)\neq (0,0)$, so $a=1$ which is a contradiciton with our assumption.
\end{comment}
\end{proof}
%We will need the following result of Furio and Lombardo \cite{FL}, which builds on work of Zywina \cite[Proposition 1.13.]{zyw} and Le Fourn and Lemos \cite{Le_Fourn_Lemos}.

We will need the following result of Zywina (\cite[Proposition 1.13.]{zyw}); see~\cite[Appendix~B]{Le_Fourn_Lemos} for a published proof.

\begin{proposition}[Zywina]
\label{prop-zyw}
Suppose that $E/\Q$ does not have CM, $r\geq 17$, $(r,j(E))\notin\{ (17,-17\cdot 373^3/2^{17}), (17,-17^2\cdot 101^3/2), (37,-7\cdot 11^3), (37,-7\cdot 137^3\cdot 2083^3)\}$ and $\rhobar_{E,r}$ is not surjective. Then
\begin{enumerate}
\item If $r\equiv 1 \pmod 3$, then $\rhobar_{E,r}(G_\Q)$ is conjugate in $\GL_2(\F_r)$ to $C_{ns}^+(r)$.
\item If $r\equiv 2 \pmod 3$, then $\rhobar_{E,r}(G_\Q)$ is conjugate in $\GL_2(\F_r)$ to either $C_{ns}^+(r)$ or
    $$G_3(r):=\left\{a^3,\ a \in C_{ns}(r)\right\} \cup \left\{\begin{pmatrix}
                        1 &  0\\
                        0 & -1
                      \end{pmatrix}a^3,\ a \in C_{ns}(r)\right\}.$$
\end{enumerate}
\end{proposition}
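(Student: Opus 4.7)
The plan is to apply Dickson's classification of subgroups of $\GL_2(\F_r)$ to $G := \rhobar_{E,r}(G_\Q)$, use known arithmetic results to rule out every case except the non-split Cartan normalizer, and then pin down which subgroup of $C_{ns}^+(r)$ actually arises. Throughout, the key input from $E/\Q$ is that $\det \circ \rhobar_{E,r}$ is the mod-$r$ cyclotomic character, so $\det(G) = \F_r^\times$.

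By Dickson, a proper subgroup of $\GL_2(\F_r)$ with surjective determinant is, up to conjugacy, contained in (i) a Borel subgroup, (ii) the normalizer of a split Cartan, (iii) $C_{ns}^+(r)$, or (iv) has projective image isomorphic to $A_4$, $S_4$, or $A_5$. Case (i) would force a $\Q$-rational $r$-isogeny on $E$; by Mazur's theorem and the Kenku--Momose--Mazur description of $X_0(r)(\Q)$, for $r \geq 17$ the only non-CM $j$-invariants are the four excluded in the hypothesis, so (i) does not occur. Case (ii) is ruled out by the Bilu--Parent--Rebolledo theorems on $X_{\mathrm{split}}^+(r)(\Q)$, which preclude non-CM elliptic curves over $\Q$ from having mod-$r$ image in a split Cartan normalizer for $r \geq 11$. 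Case (iv) is ruled out by the standard inertia argument at $r$: the image of inertia at $r$ in $\rhobar_{E,r}$ contains an element of projective order exceeding $5$ once $r \geq 17$, which is impossible in $A_4$, $S_4$, $A_5$.

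This reduces us to case (iii). Identifying $C_{ns}(r) \cong \F_{r^2}^\times$ via $\begin{pmatrix} a & \epsilon b \\ b & a \end{pmatrix} \leftrightarrow a + b\sqrt{\epsilon}$, the restriction of the determinant to $C_{ns}(r)$ becomes the norm map $x \mapsto x^{r+1}$. Complex conjugation is an involution of determinant $-1$, while a direct matrix computation shows that the only involutions of $C_{ns}(r)$ are $\pm I$, both of determinant $1$. Therefore $G \not\subset C_{ns}(r)$, and writing $G = H \sqcup w H$ with $H = G \cap C_{ns}(r) = \langle g^k \rangle$ for a generator $g$ of $\F_{r^2}^\times$ and some $w \in C_{ns}^+(r) \setminus C_{ns}(r)$, a short calculation gives $[\F_r^\times : \det(H)] = \gcd(k, r-1)$. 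The surjective-determinant condition then forces $\gcd(k, r-1) \in \{1, 2\}$, cutting down the list of candidate $H$'s drastically. A further modular-curve analysis shows that for $r \geq 17$ and non-CM $E/\Q$ the only candidates actually realized are $H = C_{ns}(r)$ (giving $G = C_{ns}^+(r)$) and, when $r \equiv 2 \pmod 3$, the cubes subgroup $H = \{a^3 : a \in C_{ns}(r)\}$ (giving $G = G_3(r)$); when $r \equiv 1 \pmod 3$ the cubes subgroup already fails the determinant check, which is why only the first possibility survives. The main obstacles are the Bilu--Parent--Rebolledo input for case (ii) and the final modular-curve analysis inside $C_{ns}^+(r)$; the intervening Dickson bookkeeping and the combinatorial reduction to $\gcd(k, r-1) \in \{1,2\}$ are elementary.
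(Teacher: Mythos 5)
This proposition is not proved in the paper at all: it is quoted from Zywina (\cite[Proposition 1.13]{zyw}), with \cite[Appendix B]{Le_Fourn_Lemos} cited for a published proof, so there is no in-paper argument to compare against. Your outline does follow the architecture of the actual proofs in those references (Dickson's classification, Mazur's isogeny theorem for the Borel case, Bilu--Parent--Rebolledo for the split Cartan normalizer, inertia at $r$ for the exceptional images, and then an analysis inside $C_{ns}^+(r)$), and several of your intermediate computations are correct --- in particular the observation that complex conjugation forces $G\not\subset C_{ns}(r)$, and the determinant bookkeeping showing that $G_3(r)$ has full determinant exactly when $r\equiv 2\pmod 3$.

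There are, however, two genuine gaps. First, and most seriously, the entire content of the proposition is concentrated in the sentence you dispatch as ``a further modular-curve analysis shows that \dots the only candidates actually realized are $H=C_{ns}(r)$ and the cubes subgroup.'' Your condition $\gcd(k,r-1)\in\{1,2\}$ does not come close to isolating these two cases: since $k$ only needs to divide $r^2-1$ with $\gcd(k,r-1)\le 2$, it still admits $k$ running over many divisors of $2(r+1)$, and you give no mechanism for excluding, say, the index-$2$ or index-$4$ or index-$6$ subgroups. The actual proofs pin $k$ down by using the image of inertia at $r$ (in the potentially supersingular case it is cyclic of order $(r^2-1)/e$ with $e\in\{1,2,3,4,6\}$, forcing $k\mid e$) together with the applicability conditions and a conjugacy analysis; none of this is in your sketch, and without it the statement is asserted rather than proved. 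Second, your disposal of the exceptional case is too quick: the standard lower bound on the projective order of an inertia element at $r$ is of the shape $(r\pm 1)/e$ with $e$ as large as $4$ or $6$, which for $r=17$ and $r=19$ does not exceed $5$, so the claim that the inertia image ``contains an element of projective order exceeding $5$ once $r\geq 17$'' fails exactly at the low end of the range you need; closing $r=17,19$ requires the finer arguments of Serre/Mazur, not the naive bound.
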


\begin{remark}
The recent results of Furio and Lombardo \cite{FL} prove that if $r\geq 37$, then the case (2) in \Cref{prop-zyw} is not possible.
\end{remark}

%\begin{proposition}
%\label{prop-zyw}
%Suppose that $E/\Q$ does not have CM, $r\geq 17$, $(r,j(E))\notin\{ (17,-17\cdot 373^3/2^{17}), (17,-17^2\cdot 101^3/2), (37,-7\cdot 11^3), (37,-7\cdot 137^3\cdot 2083^3)\}$ and $\rhobar_{E,r}$ is not surjective. Then $\rhobar_{E,r}(G_\Q)$ is conjugate in $\GL_2(\F_r)$ to $C_{ns}^+(r)$.
%\end{proposition}

%The following propositions give conditions for elliptic curves $E$ satisfying $j(E)\in \Q$, without any assumption on its~$2$-torsion,
%to have $r$-isogenies over $\Q(\zeta_r)$.

Given an elliptic curve $E$ defined over~\(\Q(\zeta_r)\) with $j(E)\neq 0,1728$, having an $r$-isogeny and a certain amount of $2$-torsion is a property depending only on the $j$-invariant, so we can (and will in the proofs of the propositions below) suppose that $E$ is defined over $\Q$. This is justifiable because if $E$ is not defined over $\Q$ and $j(E)\in \Q \backslash \{0,1728\}$, then $E$ has a quadratic twist $E'$ defined over $\Q$ and $E$ and $E'$ will have the same $2$-torsion structure and isogenies of the same degrees over $\Q(\zeta_r)$, and so we can replace~$E$ by~$E'$ in the arguments.

A subgroup $G$ of $\GL_2(\F_r)$ is called \textit{applicable} (cf. \cite[Definition 2.1]{zyw}) if $\det G=\F_r^\times$ and $G$ contains an element of trace $0$ and determinant $-1$ (which is the image of complex conjugation). Note that $\rhobar_{E,r}(G_\Q)$ is always an applicable subgroup \cite[Proposition 2.2]{zyw}.

\begin{proposition}
\label{prop1}
Let $r$ be an odd prime, $E/\Q(\zeta_r)$ with $j(E)\in \Q$ be an elliptic curve without complex multiplication (CM), such that $E$ has an $r$-isogeny over $\Q(\zeta_r)$ and
\begin{multline}\label{eq:execptions}
(r,j(E))\notin\left\{ (7,\frac{3^3\cdot 5 \cdot 7^5}{2^7})\right\}\\
 \cup \left\{\left(5,
\frac{5^4t^3(t^2 + 5t + 10)^3(2t^2 + 5t + 5)^3(4t^4 + 30t^3 + 95t^2 + 150t + 100)^3}{(t^2 + 5t + 5)^5 (t^4 + 5t^3 + 15t^2 + 25t + 25)^5}\right),  t\in \Q \right\}.
\end{multline}
Then $E$ has an $r$-isogeny over $\Q$.
\end{proposition}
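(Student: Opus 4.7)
The plan is to reduce to $E$ defined over $\Q$, as justified immediately before the statement, and then to translate the hypothesis into a purely group-theoretic condition on $G := \rhobar_{E,r}(G_\Q)$. Since $\Q(\zeta_r)$ is the fixed field of the kernel of the mod~$r$ cyclotomic character and $\det \circ \rhobar_{E,r}$ is that character, one has $\rhobar_{E,r}(G_{\Q(\zeta_r)}) = G \cap \SL_2(\F_r) = S(G)$. The hypothesis then says that $S(G)$ is contained, up to conjugation, in a Borel subgroup $B$, and the desired conclusion is that $G$ itself lies in $B$.

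For $r \geq 17$ I would apply \Cref{prop-zyw}. If $\rhobar_{E,r}$ is surjective then $S(G) = \SL_2(\F_r)$ is not contained in any Borel, contradicting the hypothesis. If $(r, j(E))$ is one of the Zywina exceptions then $E$ is already known to have an $r$-isogeny over $\Q$ (these are the sporadic $\Q$-points on $X_0(r)$ for $r = 17$ and $r = 37$), so the conclusion holds tautologically. Otherwise $G$ is conjugate to $C_{ns}^+(r)$ or, when $r \equiv 2 \pmod 3$, to $G_3(r)$. The first case is ruled out directly by \Cref{lem:subgroups}. For the second, I would use that $G_3(r) \subseteq C_{ns}^+(r)$ and that every non-scalar element of $C_{ns}(r) \cap \SL_2(\F_r)$ has eigenvalues in $\F_{r^2} \setminus \F_r$; since the cube subgroup of $C_{ns}(r) \cap \SL_2(\F_r)$ has order $(r+1)/3 \geq 6$ for $r \geq 17$, the image $S(G_3(r))$ contains at least one such non-scalar element and therefore has no common fixed line in $\PP^1(\F_r)$, so it cannot lie in any Borel.

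For $r \in \{3, 5, 7, 11, 13\}$ I would follow the approach indicated by Lombardo and acknowledged in the introduction. A short group-theoretic argument shows that if $S(G) \subseteq B$ but $G \not\subseteq B$, then $S(G)$ fixes at least two lines of $\PP^1(\F_r)$, which forces $S(G) \subseteq C_s(r)$ and hence $G \subseteq C_s^+(r)$ (the degenerate subcase where $S(G)$ is central needs to be treated by hand for each small $r$). Therefore the non-CM $j$-invariants producing such a $G$ are the non-CM non-cuspidal $\Q$-points of $X_{\mathrm{sp}}^+(r)$ that do not already lie on $X_0(r)$. For $r = 3$ this is vacuous; for $r = 5$, $X_{\mathrm{sp}}^+(5)$ has genus $0$ and admits a rational parameterization producing precisely the one-parameter family of $j$-invariants in~\eqref{eq:execptions}; for $r = 7$, Elkies' classical calculation furnishes the unique non-CM non-cuspidal $\Q$-point with $j = 3^3 \cdot 5 \cdot 7^5 / 2^7$; and for $r \in \{11, 13\}$, known results on $X_{\mathrm{sp}}^+(r)$ (Bilu--Parent and subsequent work) rule out any non-CM non-cuspidal $\Q$-point. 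The main obstacle lies in this small-$r$ enumeration: one must list every applicable subgroup satisfying the condition --- including the degenerate central cases --- and verify, with explicit computations on the relevant modular curves (a Magma check suffices), that the only $j$-invariants failing the conclusion are exactly those displayed in~\eqref{eq:execptions}.
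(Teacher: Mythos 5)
Your reduction to a group-theoretic statement and your treatment of $r\ge 17$ are correct and essentially match the paper: the paper also invokes \Cref{prop-zyw} together with \Cref{lem:subgroups}, handling $S(G_3(r))$ by an orbit count where you use the fact that non-scalar elements of a non-split Cartan have no eigenvalues in $\F_r$; both work, and your explicit disposal of the surjective case and of Zywina's four exceptional $j$-invariants (which lie on $X_0(17)$ and $X_0(37)$) fills in steps the paper leaves implicit. Your structural observation for small $r$ --- that $S(G)$ reducible and $G$ irreducible forces, outside the case $S(G)\subseteq\{\pm I\}$, that $S(G)\subseteq C_s(r)$ and $G\subseteq C_s^+(r)$ --- is also correct and is a genuine addition, since the paper instead runs an unstructured brute-force search over applicable subgroups.

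However, the small-$r$ enumeration as you describe it has a genuine gap: you identify the exceptional $j$-invariants with the non-CM, non-cuspidal rational points of $X_{\mathrm{sp}}^+(r)$, but by the paper's own \Cref{lem:subgroups} the group $C_s^+(r)$ itself never occurs here (its intersection with $\SL_2(\F_r)$ is irreducible), so the candidate groups are always \emph{proper} subgroups of $C_s^+(r)$ with surjective determinant, and their modular curves are proper covers of $X_{\mathrm{sp}}^+(r)$. Concretely: for $r=5$ the family in \eqref{eq:execptions} is the degree-$30$ $j$-map of an index-$2$ subgroup of $C_s^+(5)$ (Zywina's $G_3$), not the degree-$15$ $j$-map of $X_{\mathrm{sp}}^+(5)$, whose image is strictly larger; and for $r=7$ the curve $X_{\mathrm{sp}}^+(7)$ has genus $0$ with infinitely many rational points, so there is no ``unique non-CM non-cuspidal $\Q$-point'' on it --- the single exceptional value $3^3\cdot 5\cdot 7^5/2^7$ arises from the index-$2$ and index-$4$ subgroups $G_1$ and $H_{1,1}$ of $C_s^+(7)$, whose modular curves do have finitely many rational points. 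As written, your argument would therefore either prove a weaker statement with a strictly larger exceptional set or assert false facts about $X_{\mathrm{sp}}^+(5)$ and $X_{\mathrm{sp}}^+(7)$; to get the proposition as stated you must, as the paper does, enumerate the actual candidate subgroups (including the degenerate case $S(G)\subseteq\{\pm I\}$, which you defer but never carry out) and then quote Zywina's classification of which of these occur as $\rhobar_{E,r}(G_\Q)$ and with which $j$-invariants.
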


\begin{proof}
As explained above we can assume that $E$ is defined over $\Q$. Suppose first $r\geq 17$.  We have shown in Lemma \ref{lem:subgroups} that $S(C_{ns}^+(r))$ does not fix any $1$-dimensional subspace of $\F_r^2$. Assume now $r\equiv 2 \pmod 3$. The group $\{a^3, a\in C_{ns}(r)\}$ is cyclic of order $\frac{r^2-1}{3}$, and the map $t\mapsto t^3$ is an automorphism of $\F_r^\times$, so we conclude that $\#S(G_3(r))=\frac{2(r+1)}{3}$.

Now we study the action of $S(G_3(r))$ on $\F_r^2 \backslash \{(0,0)\}$. Since $S(G_3(r))\leq S(C_{ns}^+(r))$ this action is free. Hence the orbit of any $(x,y)$ has cardinality $2(r+1)/3$. It is again easy to see that there cannot be a fixed $1$-dimensional subspace of $\F_r^2$, as otherwise there would be orbits $S_1, \ldots, S_k$ in $\F_r^2 \backslash \{(0,0)\}$ such that the sum of their lengths is $r-1$. This is clearly impossible.
Now applying Proposition \ref{prop-zyw} completes the proof for $r\geq 17$.

To deal with $r< 17$ we note that $E$ has an $r$-isogeny over $\Q(\zeta_r)$ but no $r$-isogeny over $\Q$ if and only if $\rhobar_{E,r}(G_\Q)$ acts irreducibly on $\F_r^2$, but $\rhobar_{E,r}(G_\Q)\cap \SL_2(\F_r)$ acts reducibly on $\F_r^2$. A brute force search among all proper applicable subgroups of $\GL_2(\F_r)$ yields, up to conjugacy, one possible subgroup for $r=5$, 2 groups for $r=7,11$ and $13$ each.

For $r=13$ the candidate groups are of index 182 and 546, which are not possible by \cite[Theorem 1.8 and Remark 1.9]{zyw} and \cite[Theorem 1.1.]{bdmtv2}. For $r=11$ the candidate groups are of index 132 and 264, which are not possible by \cite[Theorem 1.6]{zyw}.

For $r=7$, the two candidate groups are of index 56 and 112, and both are possible by \cite[Theorem 1.5]{zyw}; they are conjugate in $\GL_2(\F_7)$ to the groups denoted $G_1$ and $H_{1,1}$ in \cite[Section 1.4]{zyw}. As $G_1=\pm H_{1,1}$ and our conditions are quadratic-twist invariant, by \cite[Theorem 1.5]{zyw}, it follows that $E$ has a $7$-isogeny over $\Q(\zeta_7)$, but no $7$-isogeny over $\Q$, if and only if $j(E)=\frac{3^3\cdot 5 \cdot 7^5}{2^7}$.

For $r=5$ the only candidate group we find is conjugate in $\GL_2(\F_5)$ to the group denoted $G_3$ in \cite[Section 1.3]{zyw}. By \cite[Theorem 1.4. (ii)]{zyw}, $\rhobar_{E,3}\subseteq G_3$ if and only if $j(E)$ is of the form given in \eqref{eq:execptions}.
\begin{comment}

Assume now $r\equiv 2 \pmod 3$. The group $\{a^3, a\in C_{ns}(r)\}$ is cyclic of order $\frac{r^2-1}{3}$, and the map $t\mapsto t^3$ is an automorphism of $\F_r^\times$, so we conclude that $\#S(G_3(r))=\frac{2(r+1)}{3}$.

Now we study the action of $S(G_3(r))$ on $\F_r^2 \backslash \{(0,0)\}$. Since $S(G_3(r))\leq S(C_{ns}^+(r))$ this action is free. Hence the orbit of any $(x,y)$ has cardinality $2(r+1)/3$. It is again easy to see that there cannot be a fixed $1$-dimensional subspace of $\F_r^2$, as otherwise there would be orbits $S_1, \ldots, S_k$ in $\F_r^2 \backslash \{(0,0)\}$ such that the sum of their lengths is $r-1$. This is clearly impossible.
\end{comment}

\end{proof}

\begin{remark}
  Recall (see e.g. \cite[Table 4]{lr}) that for $r\geq 17$, $E/\Q$ without CM has an $r$-isogeny if and only if $$(r,j(E))\in\left\{ (17,-17\cdot 373^3/2^{17}), (17,-17^2\cdot 101^3/2), \right.
\left.(37,-7\cdot 11^3), (37,-7\cdot 137^3\cdot 2083^3)\right\},$$ so by \Cref{prop1} these are the only instances when a non-CM elliptic curve defined over $\Q$ can have an $r$-isogeny over $\Q(\zeta_r)$ for $r\geq 17$.
\end{remark}

\begin{proposition}
\label{prop:ired}
Suppose that $r\geq 5$, that $E/\Q(\zeta_r)$ has $CM$ and either $j(E)\in \Q\backslash \{0,1728\}$ or if $j(E)\in\{0,1728\}$ that $E$ is a base change of an elliptic curve defined over $\Q$. The following are equivalent:
\begin{itemize}
  \item[(a)] The curve $E$ has an $r$-isogeny over $\Q(\zeta_r)$.
  \item[(b)] Every elliptic curve $E'/\Q$ with $j(E')=j(E)$ has an $r$-isogeny over $\Q$.
  \item[(c)] $r$ divides $D$, where $-D$ is the discriminant of the imaginary quadratic number field containing the endomorphism ring of $E$.
\end{itemize}
%Then $E$ has an $r$-isogeny over $\Q(\zeta_r)$ if and only if any elliptic curve $E'/\Q$ with $j(E')=j(E)$ has an $r$-isogeny over $\Q$ which occurs if and only if $r$ divides $D$, where $-D$ is the discriminant of the imaginary quadratic number field containing the endomorphism ring of $E$.
\end{proposition}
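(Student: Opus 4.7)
The plan is to establish (c) $\Rightarrow$ (b) $\Rightarrow$ (a) $\Rightarrow$ (c); the middle implication is trivial since $\Q \subseteq \Q(\zeta_r)$, while (c) $\Rightarrow$ (b) is a standard CM argument and (a) $\Rightarrow$ (c) is the main content. Throughout, I reduce to $E$ defined over $\Q$: when $j(E) \notin \{0,1728\}$ this is possible by passing to a quadratic twist with the same $j$-invariant (which leaves the $r$-isogeny structure over $\Q(\zeta_r)$ unchanged), and when $j(E) \in \{0,1728\}$ the hypothesis already provides a model over $\Q$.

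For (c) $\Rightarrow$ (b), suppose $r \mid D$, so $r$ ramifies in $K$ with $r\calO_K = \mathfrak{r}^2$. Since $r \geq 5$ and the conductor $f$ of $\calO = \End(E/\Qbar)$ is at most $3$ (as $\calO$ is one of the finitely many imaginary quadratic orders with a $\Q$-rational $j$-invariant), we have $\gcd(r,f)=1$ and $\calO/r\calO \cong \calO_K/r\calO_K \cong \F_r[\epsilon]/(\epsilon^2)$. Viewing $E[r]$ as a free rank-one module over this local ring, the unique nontrivial proper $\calO$-submodule is the $\F_r$-line $E[\mathfrak{r}]$. It is $G_K$-stable by $\calO$-linearity of the Galois action and fixed by complex conjugation because $\bar\mathfrak{r} = \mathfrak{r}$, so $E \to E/E[\mathfrak{r}]$ is a $\Q$-rational $r$-isogeny. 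Since the construction depends only on $j(E)$, every $E'/\Q$ with $j(E')=j(E)$ inherits the same rational isogeny, giving (b).

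For (a) $\Rightarrow$ (c) I argue the contrapositive: assume $r \nmid D$. The only quadratic subfield of $\Q(\zeta_r)$ is $\Q(\sqrt{r^*})$ with $r^* = (-1)^{(r-1)/2}r$, so $K = \Q(\sqrt{-D}) \subseteq \Q(\zeta_r)$ would force $D=r$; hence $K \not\subseteq \Q(\zeta_r)$ and one can pick $\sigma \in G_{\Q(\zeta_r)} \setminus G_K$. Writing $C \in \{C_s(r), C_{ns}(r)\}$ for the Cartan subgroup containing $\rhobar_{E,r}(G_K)$, the image $\rhobar_{E,r}(\sigma)$ lies in $C^+ \setminus C$, has determinant $1$ and trace zero, hence characteristic polynomial $x^2+1$ with eigenvalues $\pm i$. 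If $r \equiv 3 \pmod 4$, these eigenvalues lie outside $\F_r$, so $\rhobar_{E,r}(\sigma)$ fixes no $\F_r$-line and neither does $\rhobar_{E,r}(G_{\Q(\zeta_r)})$, contradicting (a).

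If $r \equiv 1 \pmod 4$, the eigenvalues $\pm i$ lie in $\F_r$ and $\rhobar_{E,r}(\sigma)$ has two fixed lines, so I need a second element to finish. First, $\rhobar_{E,r}(G_K)$ cannot be contained in the central scalars $\F_r^\times \cdot I$: otherwise $\det\rhobar_{E,r}(G_\Q)$ would be contained in $(\F_r^\times)^2 \cup (-1)(\F_r^\times)^2 = (\F_r^\times)^2$ (using $-1 \in (\F_r^\times)^2$ here), contradicting surjectivity of the cyclotomic character $\det\rhobar_{E,r}$. The plan is then to exhibit a non-central $\tau \in \rhobar_{E,r}(G_{K(\zeta_r)}) = \rhobar_{E,r}(G_K) \cap \SL_2(\F_r)$: in the non-split case such $\tau$ has no fixed line in $\F_r^2$ by \Cref{lem:subgroups}; in the split case $\tau$ is a non-scalar diagonal matrix fixing exactly the two coordinate lines $E[\mathfrak{r}], E[\bar\mathfrak{r}]$, which are distinct from the eigenlines of the swap element $\rhobar_{E,r}(\sigma)$. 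Either way, $\rhobar_{E,r}(G_{\Q(\zeta_r)})$ fixes no common line, contradicting (a). The main obstacle is the existence of this non-central $\tau$: the cardinality identity $|\rhobar_{E,r}(G_{K(\zeta_r)})| = |\rhobar_{E,r}(G_K)|/(r-1)$ (which uses surjectivity of $\det|_{G_K}$ from $K \not\subseteq \Q(\zeta_r)$) by itself does not rule out the possibility that $\rhobar_{E,r}(G_{K(\zeta_r)})$ reduces to $\{\pm I\}$ for a handful of small exceptional pairs $(r,K)$; these are ruled out by appealing to the explicit classification of mod-$r$ Galois images of CM elliptic curves over $\Q$ (e.g., results of Lozano-Robledo), or equivalently by the finite case-by-case Magma check accompanying the paper.
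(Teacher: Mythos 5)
Your overall architecture is sound and close in spirit to the paper's: the reduction to $E/\Q$, the trivial implication (b)$\Rightarrow$(a), the standard CM argument for (c)$\Rightarrow$(b), and, for (a)$\Rightarrow$(c), the observation that any $\sigma\in G_{\Q(\zeta_r)}\setminus G_K$ maps to an element of $C^+\setminus C$ with characteristic polynomial $x^2+1$, which settles $r\equiv 3\pmod 4$ outright. (The paper instead quotes Zywina's Propositions 1.14/1.16 for the precise image and then invokes Lemma~\ref{lem:subgroups}; your route is more self-contained for that half.) The split-case observation that a non-central diagonal element and an anti-diagonal element share no eigenline is also correct.

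The problem is the step you yourself flag: producing a non-central $\tau\in\rhobar_{E,r}(G_K)\cap\SL_2(\F_r)$ when $r\equiv 1\pmod 4$. This is not a formality that "the classification" disposes of — the classification is precisely what permits the bad case. For $j(E)=0$ and $r\equiv 2\pmod 3$, Zywina's Proposition 1.16 only guarantees that $\rhobar_{E,r}(G_\Q)$ \emph{contains} $G_3(r)$, so $\rhobar_{E,r}(G_K)$ may be the index-$3$ subgroup (the cubes) of $C_{ns}(r)$. For $r=5$ the cubes of $C_{ns}(5)$ are $\F_5^\times\cup\F_5^\times\sqrt{\epsilon}$, whose intersection with $\SL_2(\F_5)$ is exactly $\{\pm I\}$: no non-central $\tau$ exists. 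Worse, one computes $S(G_3(5))=\{\pm I,\pm\mathrm{diag}(2,3)\}$, which fixes the two coordinate lines, so group theory alone cannot yield irreducibility over $\Q(\zeta_5)$ here; one would need the arithmetic fact that no sextic twist with $j=0$ realizes this small image mod $5$, and that appears to be false (e.g.\ $y^2=x^3+10$ has $a_7=-5$, $a_{13}=5$, $a_{37}=-10$, $a_{43}=5$, and $a_{19}=1$, $a_{31}=-7$ with $a_p^2\equiv 4p\pmod 5$ — all consistent only with the index-$3$ image, which forces a $5$-isogeny over $\Q(\sqrt5)\subset\Q(\zeta_5)$). So the gap is concentrated at $(r,j)=(5,0)$ and cannot be closed as stated. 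For what it is worth, the paper's own proof is vulnerable at the same spot: the orbit-count imported from the proof of Proposition~\ref{prop1} needs $2(r+1)/3\neq r-1$, which fails exactly for $r=5$.
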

\begin{proof}

We can assume that $E=E'$, i.e., $E$ is a base change of an elliptic curve defined over $\Q$, using for $j(E)\notin\{0,1728\}$ the same argumentation as before \Cref{prop1}.

For $j(E)\neq 0$, this follows directly from \cite[Proposition 1.14]{zyw} and Lemma \ref{lem:subgroups}.

Suppose $j(E)=0$. By \cite[Proposition 1.16]{zyw} if $r\equiv 2,5, 8\pmod 9$ then $\overline \rho_{E,r}(G_\Q)$ contains the subgroup $G_3(r)$ defined in \Cref{prop-zyw}. Using the same argumentation as in the proof of Proposition \ref{prop1} we conclude that $E$ has no $r$-isogeny over $\Q(\zeta_r)$.

Suppose $r\equiv 1,4, 7\pmod 9$. By \cite[Proposition 1.16]{zyw} it follows that $\overline \rho_{E,r}(G_\Q)$ contains the subgroup $G$ of $C_s^+(r)$ consisting of matrices of the form $\begin{pmatrix}
                        a & 0 \\
                        0 & b
                      \end{pmatrix}$
and $\begin{pmatrix}
                        0 & a \\
                        b & 0
                      \end{pmatrix}$
with $a/b\in (\F_r^\times)^3$. Suppose that $\overline \rho_{E,r}(G_{\Q(\zeta_r)})$ is reducible, so $S(G)$ fixes a 1-dimensional subspace of $\F_r^2$ generated by $(x,y)$. Let $t\in \F_r$ such that $t^3 \neq t^{-3}$; such a $t$ exists for all $r\neq 7$. Then for
$B:=\begin{pmatrix}
                        t^3 & 0 \\
                        0 & t^{-3}
                      \end{pmatrix} \in S(G)$, since we have $B(x,y)=(t^3x,t^{-3}y)=\beta(x,y)$ for some $\beta \in \F_r^\times$ we conclude that either $x=0$ or $y=0$.

Since $A=\begin{pmatrix}
                        0 & 1 \\
                        -1 & 0
                      \end{pmatrix} \in S(G)$ we have $A(x,y)=(y,-x)=\alpha (x,y)$ for some $\alpha \in \F_r^\times$, so $y=\alpha x=-\alpha^2 y$, which implies that if one of $x$ or $y$ is equal to 0, then so is the other. The equality $y=\alpha x=-\alpha^2 y$ also eliminates the case $r=7$ as $-1$ is not a square in $\F_7^\times$, so again we get $y=x=0$. This gives a contradiction and the result.
\end{proof}

%%%%%%%%%%%%%%%%%%%%%
\section{Elliptic curves with a $2$-torsion point and reducible mod $r$ Galois representations}
%%%%%%%%%%%%%%%%%%%%%

In this section we prove results about surjectivity of mod $r$ Galois representations of elliptic curves $E/\Q$ with a point of order $2$ over $\Q$. %or with full $2$-torsion  E have both full $2$-torsion and a mod $r$ representation that is (absolutely) reducible for $r>?$.

\begin{proposition}\label{prop:B1}
Let~\(E/\Q\) be a non-CM elliptic curve such that \(E\) has a \(\Q\)-rational \(2\)-torsion point and let~\(r\ge11\) be a prime number. Then, we have~\(\rhobar_{E,r}(G_\Q) = \GL_2(\F_r)\). In particular, \(E\) has no isogeny of degree~\(r\) over~\(\Q(\zeta_r)\).
\end{proposition}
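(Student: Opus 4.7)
The plan has two stages: first I would rule out that $\rhobar_{E,r}(G_\Q)$ is contained in a Borel subgroup, and then eliminate all other proper subgroups as possible images.

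For the first stage: the rational $2$-torsion point $P\in E(\Q)[2]$ generates a $\Q$-rational cyclic subgroup of order $2$, so $E$ admits a $2$-isogeny over $\Q$. If in addition $\rhobar_{E,r}(G_\Q)$ lay in a Borel, then $E$ would also admit a $\Q$-rational cyclic subgroup $C$ of order $r$, and since $\gcd(2,r)=1$, the subgroup $\langle P\rangle\directsum C$ would be $\Q$-rational and cyclic of order $2r$; equivalently, $E$ would give a non-cuspidal rational point on $X_0(2r)$. By the Mazur--Kenku classification, the only cyclic isogeny degrees of a non-CM elliptic curve over $\Q$ form the set $\{1,\ldots,10,12,\ldots,19,21,25,27,37,43,67,163\}$, and no $2r$ with $r\ge 11$ prime lies in this set. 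This is a contradiction.

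For the second stage, I would appeal to classifications of non-surjective mod~$r$ images of non-CM $E/\Q$. For $r\ge 17$, Proposition~\ref{prop-zyw} leaves only three possibilities: either $(r,j(E))$ is one of the four exceptional pairs (each of which corresponds to a curve with a $\Q$-rational $r$-isogeny, already excluded), or the image is contained in $C_{ns}^+(r)$, or in $G_3(r)\subseteq C_{ns}^+(r)$. In either of the two Cartan-type cases, $j(E)$ would yield a non-cuspidal, non-CM rational point on (a quotient of) $X_{ns}^+(r)$ that also lies on $X_1(2)$; I would use known descriptions of $X_{ns}^+(r)(\Q)$, together with the extra $2$-torsion constraint, to rule these out for $r\ge 17$. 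For $r\in\{11,13\}$, I would carry out a finite enumeration of the applicable subgroups of $\GL_2(\F_r)$ that are not contained in a Borel (in the spirit of the brute-force search in the proof of Proposition~\ref{prop1}), and for each surviving candidate invoke Zywina's classifications in \cite{zyw} together with the work of Balakrishnan--Dogra--M\"uller--Tuitman--Vonk in \cite{bdmtv2} to check that no such image can be realised by a non-CM $E/\Q$ with a rational $2$-torsion point.

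I expect the main technical obstacle to be the normalizer-of-non-split-Cartan case, where one needs precise information about $X_{ns}^+(r)(\Q)$---historically delicate, particularly for $r=13$. Once surjectivity is in hand, the ``in particular'' statement is immediate: $\rhobar_{E,r}(G_{\Q(\zeta_r)})=\SL_2(\F_r)$ acts irreducibly on $\F_r^2$ and fixes no $1$-dimensional subspace, so $E$ admits no $r$-isogeny over $\Q(\zeta_r)$.
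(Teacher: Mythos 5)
Your first stage (Borel case) and your treatment of the ``in particular'' statement match the paper's argument in substance: the paper likewise kills the Borel case by observing that a rational $2$-torsion point plus an $r$-isogeny would force a cyclic $2r$-isogeny over $\Q$, impossible for $2r\ge 22$ by Mazur--Kenku (cited as Mazur--V\'elu for $26$). Your handling of $r=13$ is also workable, though you should note that besides the Borel and Cartan-normalizer cases (the latter killed by \cite{bdmtv}) there is an exceptional case with projective image in $S_4$, which the paper eliminates by listing the three relevant $j$-invariants from \cite{bdmtv2} and checking they have trivial rational $2$-torsion.

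The genuine gap is in your second stage for $r\ge 17$. After Proposition~\ref{prop-zyw} reduces to the case $\rhobar_{E,r}(G_\Q)\subseteq C_{ns}^+(r)$, you propose to ``use known descriptions of $X_{ns}^+(r)(\Q)$''---but no such descriptions exist: determining $X_{ns}^+(r)(\Q)$ for all primes $r\ge 17$ is precisely the open part of Serre's uniformity question, so this step cannot be carried out as stated. The paper's route around this is the key idea you are missing: by \cite[Proposition~2.1]{lemos}, a non-CM curve with mod-$r$ image in $C_{ns}^+(r)$ has \emph{integral} $j$-invariant, and a curve with a rational $2$-torsion point has $j=(t+16)^3/t$ for some $t\in\Q$; intersecting integrality with this parametrization of $X_0(2)$ yields an explicit finite list of candidate $j$-invariants (taken from p.~142 of \cite{lemos}), each of which is then checked in the LMFDB to have all mod-$p$ representations surjective for $p>3$. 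Without replacing your appeal to $X_{ns}^+(r)(\Q)$ by an argument of this kind (integrality plus the $2$-torsion constraint, or the equivalent), the proof does not close for $r\ge 17$.
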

\begin{proof}
Denote by~\(j(E)\) the \(j\)-invariant of~\(E\) and assume for a contradiction that the representation~\(\rhobar_{E,r}:G_\Q\rightarrow\GL_2(\F_r)\) is not surjective. We first deal with the case~\(r = 13\). Recall that we are in one of the following situations:
\begin{enumerate}
%\item\label{item:SL2} \(\rhobar_{E,13}(G_\Q) = \rhobar_{E,13}(G_\Q)\) contains~\(\SL_2(\F_{13})\);
\item\label{item:Borel} \(\rhobar_{E,13}(G_\Q)\) is included in a Borel subgroup of~\(\GL_2(\F_{13})\);
\item\label{item:NormC} \(\rhobar_{E,13}(G_\Q)\) is included in the normalizer of a Cartan subgroup of~\(\GL_2(\F_{13})\);
\item\label{item:excep} the projective image~\(H_{E,13}\) of~\(\rhobar_{E,13}(G_\Q)\) is isomorphic to~\(A_4\), \(S_4\), or~\(A_5\).
\end{enumerate}
%We discard~(\ref{item:SL2}) as the map~\(\det\rhobar_{E,13}:G_\Q\rightarrow\F_{13}^\times\) is surjective, and hence~\(\rhobar_{E,13}\) would be surjective in that case.

We eliminate~(\ref{item:Borel}) with a result of Mazur-V\'elu \cite{MazurVelu}: If~\(E\) had a \(13\)-isogeny, then it would have a \(26\)-isogeny over~\(\Q\), which is impossible. The main result of~\cite{bdmtv} states that case~(\ref{item:NormC}) does not occur. Finally, \(H_{E,13}\) is not isomorphic to~\(A_5\) as the order of~\(\PGL_2(\F_{13})\) is not divisible by~\(5\). Therefore~\(H_{E,13}\subset S_4\) and according to~\cite[Theorem~1.1 and Section 5.1]{bdmtv2} (see also \cite[Corollary 1.9]{BanwaitCremona} ), we have that
%\begin{multline*}
%j(E) \in\left\{11225615440/1594323, -160855552000/1594323, \right. \\
%\left.90616364985637924505590372621162077487104/197650497353702094308570556640625\right\}.
%\end{multline*}
\begin{multline*}
j(E)\in\left\{2^4\cdot 5\cdot 13^4\cdot 17^3/3^{13}, -2^{12}\cdot 5^3\cdot 11\cdot 13^4/3^{13},\right. \\
\left. 2^{18}\cdot 3^3\cdot 13^4\cdot 127^3\cdot 139^3\cdot 157^3\cdot 283^3\cdot 929/(5^{13}\cdot 61^{13})\right\}.
\end{multline*}
For each of these three values, we check that any elliptic curve with that $j$-invariant has trivial $2$-torsion over $\Q$, which shows that the case (\ref{item:excep}) is not possible. %Since for~\(j\in\Q\) non-CM (i.e., \(j\) not in the list of the thirteen rational CM \(j\)-invariants), elliptic curves~\(F/\Q\) such that~\(j(F) = j\) all have the same amount of \(2\)-torsion, we obtain a contradiction and the desired result for~\(r = 13\).

From now on, assume~\(r\ge11\) and~\(r\neq13\).  We first show that~\(\rhobar_{E,r}(G_\Q)\) is conjugate to a subgroup of~\(C_{ns}^+(r)\). For~\(r = 11\), this follows from~\cite[Theorem~1.6]{zyw} after checking that elliptic curves with $j$-invariant $-11^2$ and $-11\cdot 131^3$ have no non-trivial $2$-torsion over $\Q$. If~\(r\ge17\), this follows from Proposition~\ref{prop-zyw} (see also~\cite[Theorem~1.11]{zyw}), unless we have~\(j(E)\in\{ -17\cdot 373^3/2^{17}, -17^2\cdot 101^3/2, -7\cdot 11^3, -7\cdot 137^3\cdot 2083^3\}\). However, we check as before that these \(j\)-invariants correspond to elliptic curves with trivial rational \(2\)-torsion. Hence we have proved that~\(\rhobar_{E,r}(G_\Q)\) is conjugate to a subgroup of~\(C_{ns}^+(r)\).

According to~\cite[Proposition~2.1]{lemos}, we therefore have~\(j(E)\in\Z\). Since moreover \(E\) has a rational point of order~\(2\), its \(j\)-invariant~\(j(E)\) is of the shape
\[
j(E)=\frac{(t+16)^3}{t}
\]
for some~\(t\in\Q\) (\cite[p.~179]{Birch}) and hence one can see from \cite[p.~142]{lemos} (eliminating the CM \(j\)-invariants) that
\begin{multline*}
j(E) \in \{-2^2\cdot 7^3,-2^4\cdot 3^3,-2^6, 2^7, 2^4\cdot 5^3, 2^{11}, 2^2\cdot 3^6, 2^7\cdot 3^3, 17^3, 2^5\cdot 7^3, 2^5\cdot 3^6, 2^4\cdot 17^3, \\
2^3\cdot 31^3, 2^2\cdot 3^6\cdot 7^3, 2^2\cdot 5^3\cdot 13^3, 2\cdot 127^3, 2\cdot 3^3\cdot 43^3, 257^3\}.
     \end{multline*}
For each element in this list, we find an elliptic curve over~\(\Q\) with the corresponding \(j\)-invariant such that all of its mod $p$ representations are surjective for $p>3$ prime. We check this in LMFDB \cite{lmfdb}. In particular, its mod~\(r\) representation is surjective. Since the mod~\(r\) representation of a single quadratic twist is surjective if and only if it is surjective for all quadratic twists, the same conclusion holds for~\(E\). This gives the desired contradiction.

The last statement follows from the fact that \(\rhobar_{E,r}(G_{\Q(\zeta_r)})=\rhobar_{E,r}(G_{\Q}) \cap \SL_2(\F_r) = \SL_2(\F_r)\) has order greater than the order of a Borel subgroup of~\(\GL_2(\F_r)\).
\end{proof}

\begin{comment}
\todo[inline]{
For $r=7$, we were unable to prove that rational non-CM elliptic curves with a rational point of order $2$ necessarily have surjective mod $7$ representations, as this leads to finding all the rational points on high genus modular curves (see e.g. \cite[Table 14]{DGJ}, entry [2B,7NS]). However we are able to find all such elliptic curves with $7$-isogenies, even when including elliptic curves with CM and those with $j(E)\in \Q$ while requiring that $E$ only have a $\Q(\zeta_7)$-rational $2$-torsion point (instead of a $2$-torsion point over $\Q$).
}
\end{comment}

\begin{proposition} \label{prop7}
The only elliptic curves $E/\Q(\zeta_7)$ with $j(E)\in \Q$, a point of order $2$ and an isogeny of degree $7$ over $\Q(\zeta_7)$ are those with $j(E)\in \{-3^3\cdot5^3, 3^3\cdot5^3\cdot17^3\}$. Furthermore, $E/\Q(\zeta_7)$ with $j(E)\in \Q$ has full $2$-torsion over $\Q(\zeta_7)$ and a $7$-isogeny if and only if we have~$j(E)=-3^3\cdot5^3.$
\end{proposition}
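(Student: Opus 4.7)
The plan is to follow the strategy of \Cref{prop1}: first reduce to the case $E$ defined over $\Q$, then use \Cref{prop1} and \Cref{prop:ired} to restrict $j(E)$ to an explicit short list of candidates, and finally check each candidate directly. For $j(E)\notin\{0,1728\}$ a quadratic twist of $E$ is defined over $\Q$ and preserves both the $2$-torsion $x$-coordinates and the isogeny structure over $\Q(\zeta_7)$. For $j(E)\in\{0,1728\}$ the geometric CM discriminants are $-3$ and $-4$, neither divisible by $7$; the argument underlying \Cref{prop:ired} is at the level of the (twist-invariant) projective image of $\rhobar_{E,7}$ and hence still applies, excluding a $7$-isogeny over $\Q(\zeta_7)$. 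Combining \Cref{prop1} and \Cref{prop:ired}, the candidate $j$-invariants over $\Q$ split into three families: the two CM values $-3^3\cdot 5^3$ and $3^3\cdot 5^3\cdot 17^3$; the exceptional non-CM value $3^3\cdot 5\cdot 7^5/2^7$ from \Cref{prop1}; and the non-CM values for which $E$ admits an honest $\Q$-rational $7$-isogeny.

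For each of the three specific $j$-invariants I would fix a Weierstrass model over $\Q$ and factor its $2$-division polynomial. For $j=-3^3\cdot 5^3$ this yields a rational $2$-torsion point and a quadratic factor whose roots lie in $\Q(\sqrt{-7})\subset\Q(\zeta_7)$, giving $E$ full $2$-torsion over $\Q(\zeta_7)$. For $j=3^3\cdot 5^3\cdot 17^3$ the factorisation again produces a rational $2$-torsion point, but the quadratic factor has discriminant not in $-7\cdot(\Q^\times)^2$, so only one $2$-torsion point is defined over $\Q(\zeta_7)$. For the exceptional $j=3^3\cdot 5\cdot 7^5/2^7$ one verifies that the $2$-division polynomial has no root in $\Q(\zeta_7)$, eliminating this case.

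It remains to handle the non-CM case with a $\Q$-rational $7$-isogeny. If the $2$-division polynomial $f$ of $E$ has a rational root, then $E$ together with its rational $7$-isogeny and rational subgroup of order $2$ defines a non-cuspidal $\Q$-rational point on the elliptic curve $X_0(14)$, whose Mordell--Weil group is $\Z/6\Z$; the two non-cuspidal rational points correspond precisely to the CM values $-3^3\cdot 5^3$ and $3^3\cdot 5^3\cdot 17^3$, contradicting the non-CM assumption. Otherwise $f$ is irreducible over $\Q$ but has a root in $\Q(\zeta_7)$; since the unique cubic subfield of $\Q(\zeta_7)$ is $K:=\Q(\zeta_7)^+$, which is cyclic Galois over $\Q$, the polynomial $f$ splits completely over $K$ and $E$ acquires full $2$-torsion over $K$.

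The main obstacle is ruling out this last configuration. Here the key observation is that $E_K:=E\otimes_\Q K$ then has both a $7$-isogeny and full $2$-torsion, so the three cyclic order-$2$ subgroups of $E_K[2]$ produce three non-cuspidal $K$-rational points on $X_0(14)$, all projecting to the single $\Q$-rational $j$-invariant $j(E)$. My plan is to verify by a direct computation over $K$ (a $2$-descent to control the rank, together with an enumeration of prime-power torsion) that $X_0(14)(K)=X_0(14)(\Q)=\Z/6\Z$, so that no non-cuspidal $K$-rational point beyond the two CM ones exists, yielding the desired contradiction. The second assertion of the proposition then follows immediately from the $2$-torsion computations of the second paragraph: among the candidate $j$-invariants, only $j=-3^3\cdot 5^3$ yields full $2$-torsion over $\Q(\zeta_7)$.
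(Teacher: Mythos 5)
Your route is genuinely different from the paper's. The paper proves the whole proposition in one stroke by computing the full Mordell--Weil group $X_0(14)(\Q(\zeta_7))\simeq\Z/2\Z\oplus\Z/6\Z$ and identifying the $j$-invariants of all twelve points; you instead reduce to curves over $\Q$, use \Cref{prop1} and \Cref{prop:ired} to cut the candidates down to two CM values, one exceptional non-CM value, and the curves with a $\Q$-rational $7$-isogeny, and then only need $X_0(14)$ over $\Q$ and over the real cubic subfield $K=\Q(\zeta_7)^+$. Most of this is sound: the dichotomy ``the $2$-division polynomial has a rational root'' versus ``it is an irreducible cubic, hence splits over $K$'' is complete, the three resulting points on $X_0(14)(K)$ are indeed distinct for a non-CM curve, and the computation you propose does come out as you expect ($X_0(14)(K)=X_0(14)(\Q)=\Z/6\Z$; this is consistent with the paper's computation, since the extra points over $\Q(\zeta_7)$ are defined over the imaginary quadratic subfield $\Q(\sqrt{-7})$ and not over the totally real field $K$). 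The discriminant checks for the two CM values and for $j=3^3\cdot 5\cdot 7^5/2^7$ are routine and match the paper's.

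There is, however, one genuine gap: the case $j(E)\in\{0,1728\}$ for curves $E/\Q(\zeta_7)$ that are \emph{not} base changes of curves over $\Q$. The proposition does not exclude these, and they are not vacuous (every quartic twist $y^2=x^3+dx$ has the $2$-torsion point $(0,0)$, and suitable sextic twists $y^2=x^3+d$ acquire a $2$-torsion point over $\Q(\zeta_7)$). Your justification --- that the argument behind \Cref{prop:ired} lives on the twist-invariant projective image of $\rhobar_{E,7}$ --- fails here, because quartic and sextic twists are not quadratic twists: they multiply the Galois action on $E[7]$ by a character valued in $\Aut(E)\cong\mu_4$ or $\mu_6$, and these automorphisms act on $E[7]$ through \emph{non-scalar} elements of the relevant Cartan subgroup (for instance $\F_7^\times$ contains no element of order $4$, so $i$ cannot act as a scalar), so the projective mod~$7$ image genuinely changes under such twists. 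This is exactly why Propositions \ref{prop:ired}, \ref{prop:CM} and \ref{prop:full2irred} carry the hypothesis that $E$ be a base change when $j(E)\in\{0,1728\}$, while \Cref{prop7} does not: in the paper it is the computation of $X_0(14)(\Q(\zeta_7))$ that disposes of these twists, since any such $E$ with a $2$-torsion point and a $7$-isogeny over $\Q(\zeta_7)$ would produce a point of $X_0(14)(\Q(\zeta_7))$ with $j\in\{0,1728\}$, and no such point exists. To close your version of the argument you would need either a direct analysis of $\rhobar_{E,7}(G_{\Q(\zeta_7)})$ for arbitrary quartic and sextic twists, or a determination of the $\Q(\zeta_7)$-points of $X_0(14)$ in the fibres over $j=0$ and $j=1728$ --- at which point you are most of the way back to the paper's computation.
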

\begin{proof}
  For $r=7$ we compute $X_0(14)(\Q(\zeta_7))\simeq \Z/2\Z\oplus \Z/6\Z$ ($X_0(14)$ is an elliptic curve). Of these $12$ points, 6 are rational; $4$ are cusps and $2$ correspond to elliptic curves over $\Q$ with $j$-invariants $3^{3} \cdot 5^{3} \cdot 17^{3}$ and $-3^{3} \cdot 5^{3}$.  Elliptic curves with either of these $j$-invariants have a single $2$-torsion point over $\Q$, multiplication by an order of $\Q(\sqrt{-7})$, and hence a $7$-isogeny over $\Q$. Recall that an elliptic curve with at least one $2$-torsion point has full $2$-torsion over a number field $k$ if and only if its discriminant is a square in $k$ and that taking a different model of the curve changes the discriminant by a 12th power, so having $2$-torsion is model-invariant.
  Hence, to check whether they acquire full $2$-torsion over $\Q(\zeta_7)$, it remains to check whether their discriminant is a square over $\Q(\zeta_7)$. For $j(E)=-3^3\cdot5^3$, we have $\Delta(E)\in -7 \cdot (\Q^\times)^2$ and for $j(E)=3^3\cdot5^3\cdot17^3$ we have $\Delta(E)\in 7 \cdot (\Q^\times)^2$. As $-7$ is a square in $\Q(\zeta_7)$ and $7$ is not, we conclude that only elliptic curves with $j(E)=-3^3\cdot5^3$ have full $2$-torsion and a $7$-isogeny over $\Q(\zeta_7)$.

  Of the remaining $6$ points which are not rational, $4$ correspond to $j$-invariants which are not $\Q$-rational, and $2$ more points correspond to the $j$-invariant $-3^3\cdot5^3$.  The fact that there are 3 points in $X_0(14)(\Q(\zeta_7))$ corresponding to the $j$-invariant $-3^3\cdot5^3$ is explained by the fact that an elliptic curve with $j$-invariant $-3^3\cdot5^3$ has full $2$-torsion over $\Q(\zeta_7)$ and hence 3 $G_{\Q(\zeta_7)}$-invariant subgroups of order $14$. %We are only considering elliptic curves with $\Q$-rational $j$-invariants and we already know from \Cref{prop:CM} that elliptic curves with $j$-invariants $-3^35^3$ have full $2$-torsion over $\Q(\zeta_7)$ and those with $j$-invariant $j$-invariants $3^{3} \cdot 5^{3} \cdot 17^{3}$ and $-3^{3} \cdot 5^{3}$ do not.

\end{proof}

\begin{remark}
For $r=3$ and $5$ there exist infinitely many rational elliptic curves with a point of order $2$ and an isogeny of degree $r$, as the modular curves $X_0(6)$ and $X_0(10)$ have genus 0 and have rational cusps. Also, there are infinitely many elliptic curves with full $2$-torsion and reducible mod $3$ Galois representations; any elliptic curve with $\Z/2\Z \times \Z/6\Z$ torsion over $\Q$ is such an example.
\end{remark}

%%%%%%%%%%%%%%%%%%%%%
\section{Elliptic curves with full $2$-torsion and reducible mod $r$ Galois representations}
%%%%%%%%%%%%%%%%%%%%%

In the next two propositions we determine the elliptic curves with $j(E) \in \Q$ with both full $2$-torsion and an $r$-isogeny over $\Q(\zeta_r)$.

\begin{proposition}
\label{prop:CM}

Suppose that $r\geq 5$, that $E/\Q(\zeta_r)$ has CM and either $j(E)\in \Q\backslash \{0,1728\}$ or if $j(E)\in\{0,1728\}$ that $E$ is a base change of an elliptic curve defined over $\Q$. Suppose that $r$ divides the discriminant of the imaginary quadratic field containing the endomorphism ring of $E$ over $\overline \Q$. Then $E$ has full $2$-torsion and an $r$-isogeny over $\Q(\zeta_r)$ if and only if $r=7$ and $j(E)=-3^3\cdot5^3$.
\end{proposition}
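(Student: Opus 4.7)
The plan is a two-step reduction: first use \Cref{prop:ired} to reduce to a finite list of CM $j$-invariants, then analyze the $2$-torsion field in each case.

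By \Cref{prop:ired}, the $r$-isogeny hypothesis is equivalent to $r\mid D$, which we are given. Since $j(E)\in\Q$, the CM order of $E$ has class number one; enumerating the thirteen such orders and intersecting with the primes $r\ge 5$ dividing the field discriminant leaves the two CM $j$-invariants for $r=7$ together with one CM $j$-invariant for each $r\in\{11,19,43,67,163\}$ (in each of the latter cases, CM by the maximal order of $\Q(\sqrt{-r})$). The $r=7$ case follows immediately from \Cref{prop7}, which singles out $j(E)=-3^3\cdot 5^3$.

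For each $r\in\{11,19,43,67,163\}$, the CM is by the maximal order $\calO_K$ of $K=\Q(\sqrt{-r})$, and since $-r\equiv 5\pmod 8$ the rational prime $2$ is inert in $K$, so $E[2]$ is a one-dimensional $\F_4=\calO_K/(2)$-vector space on which $G_K$ acts $\F_4$-linearly through a character $\chi\colon G_K\to\F_4^\times$. The central observation is that $E$ has a rational $2$-torsion point if and only if $\chi$ is trivial: if $\chi$ is trivial, $E[2]\subseteq E(K)$ and the $\Gal(K/\Q)=\Z/2\Z$ action on the three nonzero $2$-torsion points must have a fixed point by parity; conversely if $\chi$ is nontrivial, $G_K$ acts transitively on $E[2]\setminus\{0\}$, so no $G_\Q$-fixed nonzero point exists. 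In the nontrivial case, since complex conjugation reduces to the Frobenius of $\F_4$ (the prime $2$ being inert), the image of $G_\Q$ in $\Aut_{\F_2}(E[2])=S_3$ contains both the $A_3$-image of $G_K$ and a transposition contributed by complex conjugation, hence is the full $S_3$. Consequently $\Q(E[2])/\Q$ is a non-abelian extension of degree~$6$ and cannot be contained in the abelian extension $\Q(\zeta_r)$.

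It therefore suffices to show that $E$ has no rational $2$-torsion for each of the five CM $j$-invariants; I would verify this by the rational-root test on the $2$-division polynomial of an explicit Weierstrass model (from, for example, LMFDB). For instance, for $j(E)=-2^{15}$ one can take $E\colon y^2+y=x^3-x^2-7x+10$ with $2$-division polynomial $4x^3-4x^2-28x+41$, which has no rational root. The main obstacle I anticipate is the absence of a slicker uniform reason that $\chi$ is nontrivial in all five maximal-order cases; one falls back on five routine but case-specific computations.
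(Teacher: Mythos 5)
Your proof is correct, and it shares the paper's skeleton (reduce the $r$-isogeny condition via \Cref{prop:ired}, dispose of $r=7$ via \Cref{prop7}, and use that a curve with mod-$2$ image all of $\GL_2(\F_2)\cong S_3$ has a non-abelian $2$-division field, hence no full $2$-torsion inside the abelian field $\Q(\zeta_r)$), but you organize the case analysis in the opposite order and supply the key surjectivity fact by hand. The paper first invokes Zywina's Proposition~1.15 to list the five CM $j$-invariants with non-surjective mod-$2$ image, then observes that among those only the discriminant $-7$ and $-28$ curves have a prime $r\ge 5$ dividing the field discriminant, so only $r=7$ survives. You instead first enumerate the seven pairs $(r,j)$ allowed by $r\mid D$ and class number one, and then for $r\in\{11,19,43,67,163\}$ prove mod-$2$ surjectivity directly: since $2$ is inert in $\Q(\sqrt{-r})$, the $\F_4$-linear action of $G_K$ on $E[2]$ together with the semilinear transposition coming from complex conjugation forces the image to be either $\{1,\tau\}$ (iff there is a rational $2$-torsion point) or all of $S_3$, and the former is ruled out by a division-polynomial check. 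Your route trades one citation for five routine computations and is more self-contained; the paper's is shorter. Two small points to tidy up: for the non-$7$ cases you should state the standard reduction (made explicit before \Cref{prop1} in the paper) allowing you to assume $E$ is defined over $\Q$, which is harmless here since quadratic twisting does not change the $E[2]$ Galois module; and you should actually carry out (or cite, e.g.\ Zywina's Proposition~1.15 or LMFDB) the remaining four division-polynomial verifications rather than only the discriminant $-11$ example.
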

\begin{proof}
We can assume that $E$ is defined over $\Q$, using for $j(E)\notin\{0,1728\}$ the same argumentation as before \Cref{prop1}.

By \Cref{prop:ired} it follows that if $E$ has an $r$-isogeny over $\Q(\zeta_r)$, then $E$ has an $r$-isogeny over $\Q$. If $\rhobar_{E,2}(G_\Q)=\GL_2(\F_2)$, then obviously $E$ does not have full $2$-torsion over $\Q(\zeta_r)$ (which is abelian over $\Q$).  Hence by \cite[Proposition 1.15]{zyw} we have
$$j(E)\in \left\{ 2^4\cdot3^3\cdot5^3, 2^3\cdot3^3\cdot 11^3, -3^3\cdot5^3, 3^3\cdot5^3\cdot17^3, 2^6\cdot 5^3\right\}.$$
By \Cref{prop:ired}, $E$ has an $r$-isogeny over $\Q$ if and only if $r$ divides $D$, where $-D$ is the discriminant of the imaginary quadratic number field containing the endomorphism ring of $E$. For $j(E)\in \left\{ 2^4\cdot3^3\cdot5^3, 2^3\cdot3^3\cdot 11^3, 2^6\cdot 5^3\right\},$ $D$ will be divisible by only $2$ or $3$ (see e.g. \cite[Table 1]{zyw}), so we need only consider elliptic curves with $j(E)=-3^3\cdot5^3$ and $3^3\cdot5^3\cdot17^3$. Now the result follows from \Cref{prop7}.

%By \cite[Proposition 1.15]{zyw} and \Cref{prop:ired}, we need only consider elliptic curves with $j(E)=-3^3\cdot5^3$ and $3^3\cdot5^3\cdot17^3$, as these are the only ones with both an $r$-isogeny over $\Q$, for $r\geq 5$, and non-surjective $\rhobar_{E,2}$.
%It turns out we necessarily have $r=7$ and

%For all such $E$, we have $\Delta(E)\in -1 \cdot (\Q^\times)^2$, so $E$ gains full $2$-torsion over $\Q(i)$ which is not a subfield of $\Q(\zeta_7)$.
\end{proof}

\begin{proposition}
Suppose that $r\geq 5$, that $E/\Q(\zeta_r)$ and either $j(E)\in \Q\backslash \{0,1728\}$ or if $j(E)\in\{0,1728\}$ that $E$ is a base change of an elliptic curve defined over $\Q$. Then if $E$ has an $r$-isogeny and full $2$-torsion over $\Q(\zeta_r)$, then $j(E)=-3^3\cdot5^3$ and $r=7$.
\label{prop:full2irred}
\end{proposition}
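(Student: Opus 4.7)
The plan is to reduce to the earlier propositions of the paper and dispatch the residual cases one at a time. As in the proofs preceding \Cref{prop1} and \Cref{prop:CM}, I would first replace $E$ by a quadratic twist to assume $E$ is defined over $\Q$, since for $j(E) \notin \{0,1728\}$ both the existence of an $r$-isogeny over $\Q(\zeta_r)$ and the field $\Q(E[2])$ depend only on $j(E)$, while for $j(E) \in \{0,1728\}$ the curve $E$ is already assumed to descend to $\Q$.

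If $E$ has CM, then \Cref{prop:ired} forces $r$ to divide the discriminant of the imaginary quadratic field containing $\End(E)$, and \Cref{prop:CM} then yields $r=7$ and $j(E) = -3^{3}\cdot 5^{3}$, exactly as required. Henceforth I would assume $E$ is non-CM. The case $r=7$ is immediate from the last statement of \Cref{prop7}: it forces $j(E) = -3^{3}\cdot 5^{3}$, but this is itself a CM $j$-invariant (with CM by the maximal order of $\Q(\sqrt{-7})$), so the non-CM case at $r=7$ is vacuous.

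For $r \geq 11$, \Cref{prop1} (whose exceptional set is empty in this range) ensures that $E$ has an $r$-isogeny already over $\Q$. Since $\Q(E[2]) \subseteq \Q(\zeta_r)$ is abelian over $\Q$ and $\Gal(\Q(E[2])/\Q) \subseteq S_3$, the image $\rhobar_{E,2}(G_\Q)$ is one of $\{1\}$, $\Z/2\Z$, or $\Z/3\Z$. In the first two cases $E$ has a $\Q$-rational $2$-torsion point, and \Cref{prop:B1} yields a contradiction. In the remaining case, $\Q(E[2])$ is a cubic cyclic subfield of $\Q(\zeta_r)$, forcing $3 \mid r-1$. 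Combined with the Mazur--Kenku classification of primes $r \geq 11$ admitting non-CM rational $r$-isogenies (namely $\{13,17,37\}$), this leaves $r \in \{13,37\}$. For $r=37$ only the two $j$-invariants $-7\cdot 11^{3}$ and $-7\cdot 137^{3}\cdot 2083^{3}$ survive, and a direct Magma computation of the Galois group of the $2$-division polynomial shows it is $S_3$ (equivalently, the discriminant is not a square in $\Q$), ruling out the cubic case. For $r=13$, non-CM $E/\Q$ with a $13$-isogeny are parametrized by $X_0(13)(\Q) \cong \PP^{1}(\Q)$, and the extra condition that $\Q(E[2])$ equal the unique cubic subfield of $\Q(\zeta_{13})$ translates into an explicit condition on the $X_0(13)$-parameter that Magma verifies has no non-CM rational solution.

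Finally, for $r=5$ the only proper subfields of $\Q(\zeta_5)$ are $\Q$ and $\Q(\sqrt{5})$, and the splitting field of a cubic has degree in $\{1,2,3,6\}$, so $[\Q(E[2]):\Q] \in \{1,2\}$ and $E$ has at least one $\Q$-rational $2$-torsion point. By \Cref{prop1}, either $E$ has a $5$-isogeny already over $\Q$ (so $E$ corresponds to a rational point of $X_0(10)$) or $j(E)$ lies in the exceptional one-parameter family of \Cref{prop1}; in each sub-case, intersecting with the condition $\Q(E[2]) \in \{\Q,\Q(\sqrt{5})\}$ cuts out finitely many rational points on an explicit modular curve, which a Magma computation shows contains no non-CM $E$. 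The main obstacle is precisely this pair of residual sub-cases ($r=13$ in the cubic regime and $r=5$ throughout), where the analog of \Cref{prop:B1} is unavailable and the conclusion has to be extracted from explicit modular-curve computations rather than a uniform surjectivity statement.
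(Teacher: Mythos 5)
Your overall decomposition is sound, and for $r\ge 11$ your route is a clean alternative to the paper's: the paper argues prime by prime (discriminant checks against the quadratic subfield of $\Q(\zeta_r)$ for $r=17,37$, a computation of $X_0(11)(\Q(\zeta_{11}))$, and a group-theoretic analysis of the group $G_7$ for $r=13$), whereas you use \Cref{prop1} to descend the isogeny to $\Q$, then split on $\rhobar_{E,2}(G_\Q)$ and invoke \Cref{prop:B1} for the cases with a rational $2$-torsion point. That works, modulo one slip: $r=11$ does admit non-CM rational $11$-isogenies ($j=-11^2$ and $-11\cdot 131^3$), so your list $\{13,17,37\}$ is wrong as stated; your conclusion survives only because $11\not\equiv 1\pmod 3$ kills the cyclic-cubic case independently. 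The CM reduction and the $r=7$ case are handled as in the paper.

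The genuine gap is at $r=5$ (and, less severely, $r=13$). For $r=5$ you assert that intersecting each sub-case with the $2$-torsion condition ``cuts out finitely many rational points on an explicit modular curve, which a Magma computation shows contains no non-CM $E$'', but this is exactly the content that needs proving: curves with a rational $2$-torsion point and a rational $5$-isogeny form an \emph{infinite} family ($X_0(10)$ has genus $0$ with rational points), the condition $\Q(E[2])=\Q(\sqrt{5})$ defines a double cover of that $\PP^1$ whose rational points are not determined by a push-button computation, and the exceptional one-parameter family of \Cref{prop1} contributes further such covers. You identify none of these curves and give no genus, rank, or method guaranteeing that their rational points can be provably found. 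The paper avoids all of this with one observation you miss: by \cite[Lemma 7]{najman}, full $2$-torsion plus a $5$-isogeny over $\Q(\zeta_5)$ yields, after a $2$-isogeny, a $20$-isogeny over $\Q(\zeta_5)$, and $X_0(20)(\Q(\zeta_5))=X_0(20)(\Q)$ consists of cusps; this disposes of all your sub-cases at once. For $r=13$, your ``explicit condition on the $X_0(13)$-parameter that Magma verifies'' amounts to determining the rational points of the fibre product of the $2\mathrm{Cn}$ and $13\mathrm{B}$ level structures, a positive-genus modular curve; the paper does not recompute this but cites the entry $[2\mathrm{Cn},13\mathrm{B}]$ of \cite[Table A8]{DGJ}, where that curve is shown to have exactly two rational points, both cusps. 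As written, your argument defers its two hardest steps to computations that are neither specified nor guaranteed to succeed.
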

\begin{proof}

As before we can assume that $E$ is a defined over $\Q$, using for $j(E)\notin\{0,1728\}$ the same argumentation as before \Cref{prop1}.

As we have already dealt with elliptic curves with CM in \Cref{prop:CM}, it remains to deal with elliptic curves without CM.%it can be seen in

For $r\geq 17$, we check that for all elliptic curves with $j(E)\in  \{-17\cdot 373^3/2^{17}, -17^2\cdot 101^3/2, -7\cdot 11^3, -7\cdot 137^3\cdot 2083^3\}$, we have $\Delta(E)\in -10\cdot (\Q^\times)^2$ for the first 2 $j$-invariants, and $\Delta(E)\in -5\cdot (\Q^\times)^2$ for the last two $j$-invariants. Neither $\Q(\sqrt{-5})$ nor $\Q(\sqrt{-10})$ are contained in $\Q(\zeta_{17})$ or $\Q(\zeta_{37})$ and now the claim follows from \Cref{prop-zyw} and Lemma \ref{lem:subgroups}. \

For $r=5$ we compute that $X_0(20)(\Q(\zeta_5))=X_0(20)(\Q)$ ($X_0(20)$ is an elliptic curve). As an elliptic curve with full $2$-torsion and an $r$-isogeny would be $2$-isogenous to an elliptic curve with a $4r$-isogeny (see \cite[Lemma 7]{najman}), we are done with this case as $X_0(20)(\Q)$ consists of cusps \cite{ligozat}.

The case $r=7$ has already been dealt with in \Cref{prop7}.

For $r=11$, we compute $X_0(11)(\Q)=X_0(11)(\Q(\zeta_{11}))$, so the only elliptic curves with $11$-isogenies over $\Q(\zeta_{11})$ are those with $j(E) \in \{ -11^2, -2^{15}, -11\cdot131^3\}$. Elliptic curves with these $j$-invariants have trivial torsion over $\Q$, so cannot gain any $2$-torsion over a number field of degree not divisible by $3$.

Finally, for $r=13$, by \cite[Theorem 1.8.]{zyw}, the discussion after it and \cite[Theorem 1.1 and Theorem 1.3]{bdmtv}, we know that $\rhobar_{E,13}(G_\Q)$ for a non-CM elliptic curve has to be $\GL_2(\F_{13})$, contained in a conjugate of a Borel subgroup or conjugate to the group $G_7$ (using Zywina's notation)
$$G_7:=\left\langle\begin{pmatrix}
               2 & 0 \\
               0 & 2
             \end{pmatrix}, \begin{pmatrix}
               2 & 0 \\
               0 & 3
             \end{pmatrix}, \begin{pmatrix}
               0 & -1 \\
               1 & 0
             \end{pmatrix}, \begin{pmatrix}
               1 & 1 \\
               -1 & 1
             \end{pmatrix}\right\rangle.
             $$
We compute in Magma that the orbits of $S(G_7)$ acting on $\F_{13}^2 \backslash \{(0,0)\}$ are all of length 24, so elliptic curves with such image do not have $13$-isogenies over $\Q(\zeta_{13})$. We conclude that if a non-CM elliptic curve has a $13$-isogeny over $\Q(\zeta_{13})$, it has one already over $\Q$. As there are no $26$-isogenies over $\Q$ \cite{MazurVelu}, an elliptic curve with a $13$-isogeny over $\Q$ cannot have a 2-torsion point over $\Q$. Since $E$ gains full $2$-torsion over $\Q(\zeta_{13})$, it follows that $\Q(E[2])$ is a subfield of $\Q(\zeta_{13})$ and hence $\rhobar_{E,2}(G_\Q)$ is cyclic of order 3. But there are no such curves, which can be seen from \cite[Table A8]{DGJ} in the line [2Cn,13B]; this shows that the modular curve whose rational points parametrize elliptic curves $E/\Q$ having simultaneously $\rhobar_{E,2}(G_\Q)$ cyclic of order 3 (denoted 2Cn) and a $13$-isogeny (denoted 13B) has $2$ rational points, both of which can easily be seen to be cusps.

%and \cite[Table 11]{DGJ}.
\end{proof}

\begin{comment}
\begin{remark}
In all of the above, one can assume that $E$ is not defined over $\Q$, but instead the weaker condition $j(E)\in \Q$.
\end{remark}
\end{comment}

For $r=5$ and $7$, if we assume that $E$ is an elliptic curve over $\Q$ with full $2$-torsion over $\Q$, we can prove a stronger result, similar to the one in \Cref{prop:B1}.

\begin{proposition}
\label{prop:57}
Let $E/ \Q$ be an elliptic curve without CM with full $2$-torsion over $\Q$. Then $\rhobar_{E,r}(G_\Q)=\GL_2(\F_{r})$ for $r=5,7$.
\end{proposition}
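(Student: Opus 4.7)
The plan mirrors the proof of Proposition \ref{prop:B1}: assume for a contradiction that $\rhobar_{E,r}(G_\Q) \subsetneq \GL_2(\F_r)$, and eliminate each possibility for the image using Dickson's classification of the maximal subgroups of $\GL_2(\F_r)$ together with Zywina's explicit enumeration of images \cite{zyw}.

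The Borel case is handled by invoking earlier results. If $\rhobar_{E,r}(G_\Q)$ is contained in a Borel subgroup, then $E$ has an $r$-isogeny over $\Q$, and hence over $\Q(\zeta_r)$; combined with full $2$-torsion over $\Q \subset \Q(\zeta_r)$, \Cref{prop:full2irred} forces $(r,j(E)) = (7,-3^3\cdot 5^3)$. But this is the $j$-invariant of an elliptic curve with CM by the order of discriminant $-7$, contradicting the non-CM hypothesis. Alternatively, \cite[Lemma~7]{najman} would produce a non-CM elliptic curve over $\Q$ that is $2$-isogenous to $E$ and carries a $4r$-isogeny over $\Q$: for $r=5$ this is impossible because $X_0(20)(\Q)$ consists only of cusps \cite{ligozat}, and for $r=7$ the non-cuspidal rational points of $X_0(28)$ all correspond to CM $j$-invariants.

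For the remaining maximal subgroups---the normalizers of a split or non-split Cartan, or an exceptional subgroup whose projective image is $A_4$, $S_4$, or $A_5$---I would invoke \cite[Theorem~1.4]{zyw} for $r=5$ and \cite[Theorem~1.5]{zyw} for $r=7$, which classify each possibility up to conjugacy and specify the associated $j$-invariant either as a member of an explicit finite set or as the image of a rational parametrization. For each such $j$-invariant, I would check that no model over $\Q$ of an elliptic curve with that $j$-invariant has fully rational $2$-torsion, by examining the factorization type of the $2$-division polynomial (which is quadratic-twist invariant, and hence depends only on $j$).

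The main obstacle is the treatment of the one-parameter families of $j$-invariants arising from $X_s^+(r)$ (and, for $r=5$, also $X_{ns}^+(5)$, which is again of genus $0$ with infinitely many rational points). Intersecting such a family with the full-$2$-torsion locus $X(2)$ amounts to analyzing the fiber product $X(2) \times_{X(1)} X_*^+(r)$, a modular curve of moderate level whose non-cuspidal non-CM rational points must be shown to be empty. For $r=5,7$ these curves are of small enough level that a direct computation---either in Magma using the GitHub repository accompanying the paper, or via the LMFDB---should suffice to conclude.
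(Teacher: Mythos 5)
Your argument is correct in outline but takes a genuinely different, and considerably heavier, route than the paper. The paper's proof is essentially a two-line application of Morrow's Theorem C \cite{Morrow}, which already classifies the possible mod~$r$ images of a non-CM curve with trivial mod~$2$ image: for $r=7$ the only non-surjective option is Borel (killed by the non-existence of non-CM $14$-isogenies), and for $r=5$ the only option is the exceptional group $G_{9,5}$ with projective image $S_4$, which is then eliminated by the line $[2\mathrm{Cs},5\mathrm{S4}]$ of \cite[Table A13]{DGJ} (the only non-cuspidal points have the CM $j$-invariant $2^6\cdot 3^3$). You instead propose to start from Dickson's classification plus Zywina's Theorems 1.4 and 1.5, handle the Borel case via \Cref{prop:full2irred} (which is valid and non-circular, since that proposition does not depend on this one --- though note that the paper's direct route via the $14$-isogeny, or via \cite[Lemma 7]{najman} and $X_0(20)$, $X_0(28)$, is shorter), and then dispose of the Cartan-normalizer and exceptional cases by computing rational points on the fiber products $X(2)\times_{X(1)}X_*^+(r)$. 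That last step is where all the substance of the proposition actually lives, and it is the one you leave as ``a direct computation should suffice'': for $r=5$ you must rule out non-CM points on the curves corresponding to $[2\mathrm{Cs},5\mathrm{Cs}]$, $[2\mathrm{Cs},5\mathrm{Ns}]$, $[2\mathrm{Cs},5\mathrm{S4}]$, and for $r=7$ on $[2\mathrm{Cs},7\mathrm{Ns}]$, several of which have positive genus and require more than a naive search. These computations are precisely the content of \cite{Morrow} and \cite{DGJ}, so your plan amounts to re-deriving the black box the paper cites; it would close the argument to cite those tables directly rather than assert the computation. With that reference supplied, your proof is complete; as written, the final step is a sketch of known but nontrivial work rather than a proof.
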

\begin{proof}
For $r=7$, by \cite[Theorem C]{Morrow}, if $\rhobar_{E,7}(G_\Q)\neq\GL_2(\F_{7})$, then $\rhobar_{E,7}(G_\Q)$ is contained in a Borel subgroup, which would imply that $E$ has a $14$-isogeny over $\Q$, which is impossible since it does not have CM.

For $r=5$, by \cite[Theorem C]{Morrow}, if $\rhobar_{E,5}(G_\Q)\neq\GL_2(\F_{5})$, then $\rhobar_{E,5}(G_\Q) \leq G_{9,5}$, where (using the notation of \cite{Morrow})
$$G_{9,5}:=\left \langle \begin{pmatrix}
                           2 & 0 \\
                           0 & 1
                         \end{pmatrix} , \begin{pmatrix}
                           1 & 0 \\
                           0 & 2
                         \end{pmatrix}, \begin{pmatrix}
                           0 & -1 \\
                           1 & 0
                         \end{pmatrix}, \begin{pmatrix}
                           1 & 1 \\
                           1 & -1
                         \end{pmatrix}\right \rangle\leq \GL_2(\F_5).$$
The projective image of the group $G_{9,5}$ is isomorphic to $S_4$, and is denoted by $5S4$ in \cite{DGJ}.

 In \cite[Table A13]{DGJ} the line $[2Cs,5S4]$ shows that the modular curve whose rational points parametrize elliptic curves $E/\Q$ having simultaneously $\rhobar_{E,2}(G_\Q)\{I\}$ (denoted $2Cs$) and having $\rhobar_{E,5}(G_\Q)\leq G_{9,5}$ (denoted $S_4$ as mentioned above) has 3 non-cuspidal rational points, all corresponding to $j(E)=2^6\cdot3^3$, and such curves have CM.
\end{proof}

\end{document}